\renewcommand{\theta}{\vartheta}
\newtheorem{theorem}{Theorem}
\newtheorem{lemma}[theorem]{Lemma}
\newcommand{\Z}{\mathbf{Z}}
\DeclareMathOperator{\Cent}{Cent}
\newcommand{\supp}{\mathrm{supp}}
\begin{document}
\author{John R. Britnell}
\title{A formal identity involving\\ commuting triples of permutations}
\shorttitle{A formal identity}
\date{22 March 2012}

\begin{abstract}
We prove a formal power series identity, relating the arithmetic sum-of-divisors function to commuting triples of permutations. 
This establishes a conjecture of Franklin T. Adams-Watters.
\end{abstract}

\maketitle

The object of this note is to establish the following formal identity:
\begin{equation}\label{identity}
\prod_{j=1}^{\infty} \left(1-u^j\right)^{-\sigma(j)} = \sum_{n=0}^{\infty} \frac{T(n)}{n!} u^n,
\end{equation}
where $\sigma$ is the arithmetic sum-of-divisors function, and~$T(n)$ is the number of triples of pairwise-commuting elements of the symmetric group~$S_n$. (Here~$S_0$ is the trivial group.)
This is a surprising fact, as there seems no obvious reason for any connection between the function~$\sigma$ and commuting permutations.

The power series expansion of the left-hand side of this identity has coefficients which are listed on the Online Encyclopedia of Integer Sequences (OEIS) \cite{OEIS} as sequence A061256.
The coefficients on the right-hand side are listed as sequence A079860. The identity of the two sequences has been stated conjecturally on OEIS.
This conjecture (from 2006) is due to Franklin T. Adams-Watters; he informs me that it was based empirically on the numerical evidence.

For a finite group $G$, we shall write~$k(G)$ for the number of conjugacy classes of~$G$.
The following simple fact seems first to have been stated by Erd\H{o}s and Tur\'an \cite{ErdosTuran}.
\begin{lemma}\label{pairs}
The number of pairs of commuting elements of $G$ is $|G|\,k(G)$.
\end{lemma}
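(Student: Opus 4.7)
The plan is to prove Lemma \ref{pairs} by a straightforward double count of the set
\[
P = \{(g,h) \in G \times G : gh = hg\}.
\]
On the one hand, stratifying by the first coordinate gives $|P| = \sum_{g \in G} |\Cent_G(g)|$, where $\Cent_G(g)$ denotes the centralizer of $g$ in $G$. On the other hand, I will re-express this sum so that the factor $k(G)$ appears naturally.

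The key observation is the orbit-stabilizer theorem applied to the conjugation action of $G$ on itself: the orbit of $g$ is its conjugacy class $g^G$, and the stabilizer is $\Cent_G(g)$, so $|\Cent_G(g)| = |G|/|g^G|$. Substituting into the sum yields
\[
|P| = \sum_{g \in G} \frac{|G|}{|g^G|}.
\]
Now I would group the terms of this sum by conjugacy class: every element $g$ in a fixed class $C$ contributes $|G|/|C|$, and there are $|C|$ such elements, so each class contributes exactly $|G|$ to the sum. Summing over the $k(G)$ conjugacy classes gives $|P| = |G|\,k(G)$, as required.

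Alternatively, one could invoke the Burnside (Cauchy--Frobenius) counting lemma directly: the number of orbits of $G$ acting on itself by conjugation is $k(G)$, and the set fixed by $g$ under this action is precisely $\Cent_G(g)$, so $k(G) = \tfrac{1}{|G|}\sum_{g \in G} |\Cent_G(g)| = |P|/|G|$. There is no genuine obstacle here; the only small point to take care of is the identification of the fixed set of the conjugation action of $g$ with its centralizer, which is immediate from the definition.
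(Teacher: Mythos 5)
Your double count is correct: both the orbit--stabilizer version and the Burnside (Cauchy--Frobenius) version are complete and standard proofs of this fact. The paper itself supplies no proof of Lemma \ref{pairs}, only a citation to Erd\H{o}s and Tur\'an, so your argument simply fills in the omitted (and entirely routine) details.
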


Let $g\in G$. It follows from Lemma \ref{pairs} that the number of commuting triples of~$G$ whose first element is~$g$, is given by $|\Cent_G(g)|\,k(\Cent_G(g))$.
So if~$T(G)$ is the total number of commuting triples, then
\begin{equation}\label{formula}
\frac{T(G)}{|G|} = \sum_{g \in G} \frac{|\Cent_G(g))|}{|G|} k(\Cent_G(g)) = \sum_{i=1}^r k(\Cent_G(g_i)),
\end{equation}
where $\{g_1,\dots, g_r\}$ is a set of conjugacy class representatives for~$G$.

In the case that~$G$ is the symmetric group~$S_n$, the conjugacy classes are parameterized by partitions of~$n$, whose parts correspond to cycle lengths. Let $g\in S_n$ have~$m_t$ cycles of length~$t$ for all~$t$. Then the centralizer of~$g$ in~$S_n$ is given (up to isomorphism) by
\[
\Cent_{S_n}(g) \cong \prod_{t=1}^n W(t,m_t),
\]
where $W(t,m)$ is the wreath product $\Z_t\wr S_m$. (Here~$\Z_t$ is used as a shorthand for $\Z/t\Z$, the integers modulo~$t$.)
It follows that
\begin{equation}\label{product}
k(\Cent_{S_n}(g)) = \prod_{t=1}^n k(W(t,m_t)).
\end{equation}

We may regard an element of $W(t,m)$ as a pair $(A,e)$, where $A\in {\Z_t}^m$ and $e\in S_m$. There is a natural action of~$S_m$ on the coordinates of~${\Z_t}^m$ given by $(B^e)_i=B_{ie^{-1}}$.
The group multiplication~$*$ in $W(t,m)$ is defined by
\[
(A,e) * (B,f) = (A+B^e,ef).
\]

Conjugacy in groups of the form $H \wr S_m$ is described in \cite[Section 4.2]{JamesKerber}; the case that $H=\Z_t$ is relatively straightforward.
Let $(A,e)$ be an element of $W(t,m)$, where  $A=(a_1,\dots,a_m)$.
Let~$c$ be a cycle of the permutation~$e$, and let $\supp(c)$ be the support of~$c$ (i.e.\ the elements of $\{1,\dots,m\}$ moved by~$c$).
We shall write~$|c|$ for $|\supp(c)|$, the length of the cycle.
Define the \emph{cycle~sum} $A[c] \in \Z_t$ by
\[
A[c] = \sum_{i \in \supp(c)} \!a_i.
\]
The \emph{cycle sum invariant} of $(A,e)$ corresponding to the cycle~$c$ is defined to be the pair $(A[c],|c|)$. The element $(A,e)$ has one such invariant for each cycle of~$e$.

\begin{lemma}\label{wpconj}
Two elements $(A,e)$ and $(B,f)$ of $W(t,m)$ are conjugate in $W(t,m)$ if and only if they have the same cycle sum invariants---that is, if and only if there is a bijection~$\tau$ between the cycles
of~$e$ and the cycles of~$f$, such that for any cycle~$c$ of~$e$ we have $(A[c],|c|) = (B[c\tau],|c\tau|)$.
\end{lemma}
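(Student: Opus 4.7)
My plan is to extract the explicit conjugation formula and then treat both directions via the same identities. A direct calculation with the multiplication rule, using the inverse formula $(C,g)^{-1} = ((-C)^{g^{-1}}, g^{-1})$, gives
\[
(C,g) * (A,e) * (C,g)^{-1} = (C + A^g - C^f,\ f), \qquad f = geg^{-1}.
\]

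For the forward direction, suppose $(A,e)$ and $(B,f)$ are conjugate via $(C,g)$, so that $B = C + A^g - C^f$ and $f = geg^{-1}$. The cycles of $f$ are the conjugates $gcg^{-1}$ of cycles $c$ of $e$, so I take $\tau(c) = gcg^{-1}$; this clearly preserves cycle lengths. For the cycle sums I would compute $B[gcg^{-1}]$ by expanding linearly and use two observations. First, since $gcg^{-1}$ is a cycle of $f$, its support is $f$-invariant, so the change of variables $i \mapsto f^{-1}(i)$ is a bijection of $\supp(gcg^{-1})$ and yields $(C^f)[gcg^{-1}] = C[gcg^{-1}]$, cancelling the $C$-contributions. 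Second, the permutation $g$ sends $\supp(c)$ bijectively onto $\supp(gcg^{-1})$ and satisfies $(A^g)_{g(j)} = A_j$, so substituting $i = g(j)$ gives $(A^g)[gcg^{-1}] = A[c]$. Together these yield $B[gcg^{-1}] = A[c]$, as required.

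For the reverse direction, suppose the invariants of $(A,e)$ and $(B,f)$ match via a bijection $\tau$. I would first choose $g \in S_m$ with $gcg^{-1} = \tau(c)$ for every cycle $c$ of $e$; such $g$ exists because $\tau$ matches cycles of equal length, so one can send $\supp(c)$ to $\supp(\tau(c))$ in a cyclic-order-compatible way. This $g$ automatically satisfies $geg^{-1} = f$, so by the conjugation formula, finding $C$ with $(C,g)(A,e)(C,g)^{-1} = (B,f)$ reduces to solving
\[
C - C^f = B - A^g.
\]
Combining the hypothesis $A[c] = B[\tau(c)]$ with the identity $(A^g)[\tau(c)] = A[c]$ from the forward argument shows that $D := B - A^g$ has vanishing cycle sum on every cycle of $f$.

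Under this vanishing hypothesis the equation $C - C^f = D$ is solved cycle by cycle: on a cycle $c' = (a_0, a_1, \dots, a_{k-1})$ of $f$ with $f(a_j) = a_{j+1}$ (indices mod $k$), the equation restricts to the telescoping recurrence $C_{a_j} - C_{a_{j-1}} = D_{a_j}$. One sets $C_{a_0} = 0$ and propagates to determine $C$ on $\supp(c')$; the condition $D[c'] = 0$ is exactly the cocycle identity that makes the recursion close up consistently on returning to $a_0$. Since the cycles of $f$ have disjoint supports these local solutions combine to a global $C \in \Z_t^m$. The principal subtlety, I expect, will be bookkeeping with the wreath-product conventions --- in particular the fact that $\supp(gcg^{-1})$ is the image of $\supp(c)$ under $g$, and the way this interacts with the action $A \mapsto A^g$; once those are nailed down, the rest reduces to linear algebra on each cycle.
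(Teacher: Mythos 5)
Your proof is correct, but it takes a different route from the paper: the paper gives no argument at all, simply citing Theorem 4.2.8 of James and Kerber on conjugacy in general wreath products $H \wr S_m$, whereas you give a direct, self-contained computation. Your conjugation formula $(C,g)*(A,e)*(C,g)^{-1} = (C + A^g - C^f, f)$ with $f = geg^{-1}$ is the right one (note it forces the action to satisfy $(X^y)^x = X^{xy}$, which is what associativity of the paper's multiplication rule requires, despite the superficially right-action notation $(B^e)_i = B_{ie^{-1}}$); the forward direction's two cancellation identities and the reverse direction's telescoping solution of $C - C^f = B - A^g$ on each cycle of $f$ are all sound, with the vanishing of the cycle sums of $B - A^g$ serving exactly as the integrability condition you describe. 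What your approach buys is transparency and independence from the reference; it also makes visible where the abelianness of $\Z_t$ is used (the cycle \emph{sum} is well defined and is a complete invariant, whereas for general $H$ one must take the conjugacy class of an ordered cycle product, and the general theorem is correspondingly more delicate). What the citation buys the paper is brevity and a pointer to the general statement. One small point worth making explicit if you write this up: for the invariant to separate conjugacy classes you must count fixed points of $e$ as cycles of length $1$ (otherwise, e.g., $((1,1),\mathrm{id})$ and $((2,0),\mathrm{id})$ would not be distinguished), and correspondingly $\tau$ must match these $1$-cycles too so that your $g$ is defined on all of $\{1,\dots,m\}$; your argument handles this uniformly once the convention is fixed.
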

\begin{proof}
See \cite[Theorem 4.2.8]{JamesKerber}, of which this is a particular case.
\end{proof}

Let $(A,e)$ be an element of $W(t,m)$. For each $z\in\Z_t$ we define~$\lambda_z$ to be the partition such that the multiplicity of~$\ell$ as a part of~$\lambda_z$ is equal to
the multiplicity of $(z,\ell)$ as a cycle sum invariant of $(A,e)$. Lemma \ref{wpconj} tells us that the partitions~$\lambda_z$ for $z\in\Z_t$ determine the conjugacy class of $(A,e)$ in
$W(t,m)$. Conversely, a collection of~$t$ arbitrary partitions $\{\lambda_z \mid z\in\Z_t\}$ determines a conjugacy class of $W(t,m)$ if and only if the total sum of the sizes of the
partitions~$\lambda_z$ is equal to~$m$.

Let~$p(d)$ denote the number of partitions of~$d$, and let~$P(u)$ be the power series
\[
P(u) =  \sum_{d=0}^{\infty} p(d)u^d.
\]
Consider the formal series
\[
Q(u) = \prod_{t=1}^{\infty}P(u^t)^t.
\]
From the discussion above, it is easily seen that each monomial term of degree~$tm$ in the expansion of $P(u^t)^t$ corresponds to a conjugacy class of $W(t,m)$, and that we therefore have
\[
P(u^t)^t = \sum_{m=0}^{\infty} k(W(t,m))u^{tm}.
\]
Now any single term in the
expansion of~$Q(u)$ corresponds to a choice, firstly of parameters~$m_t$ such that $\sum_t tm_t$ is finite, and secondly of a conjugacy class of $W(t,m_t)$ for each~$t$.
It follows from (\ref{product}) that each term of degree~$n$ in this expansion corresponds to a conjugacy class of $\Cent_{S_n}(g)$, where~$g$ is an element of~$S_n$ with~$m_t$~cycles of length~$t$. Now by (\ref{formula}) we have the formal identity
\[
Q(u) = \sum_{n=0}^{\infty} \frac{T(n)}{n!}u^n.
\]
Thus $Q(u)$ is equal to the right-hand side of (\ref{identity}), and it remains only to show that~$Q(u)$ is also equal to the left-hand side.

We use the Eulerian expansion of~$P(u)$,
\[
P(u) = \prod_{s=1}^{\infty} (1-u^s)^{-1}.
\]
From this it follows that
\[
Q(u) = \prod_{t=1}^{\infty}\prod_{s=1}^{\infty}(1-u^{st})^{-t} = \prod_{j=1}^{\infty}\prod_{t | j}(1-u^j)^{-t} = \prod_{j=1}^{\infty}(1-u^j)^{-\sigma(j)},
\]
as required.

Finally, I am indebted to Mark Wildon for the observation that both sides of (\ref{identity}) are convergent in the open unit disc
$|u|<1$, and that they therefore represent a complex function which is analytic in this disc. This can be seen by expressing the formal logarithm of the left-hand side of (\ref{identity})
as
\[
\sum_{j=1}^\infty\sum_{k=1}^\infty \frac{\sigma(j)}{k} u^{jk} = \sum_{d=1}^{\infty} \left(\sum_{a|d} \frac{a\sigma(a)}{d}\right) u^d,
\]
which has radius of convergence $1$, since clearly
\[
\sum_{a|d}a\sigma(a) < d^4.
\]
Thus the left-hand side of (\ref{identity}) represents an analytic function on the disc $|u|<1$,
and it follows that the right-hand side is the Taylor series of that function. An immediate consequence of this observation is that the growth of $T(n)/n!$ is subexponential;
I do not know of an easy combinatorial proof of this fact.

\affiliationone{Heilbronn Institute for Mathematical Research\\ School of Mathematics\\
University of Bristol\\ University Walk\\ Bristol, BS8 1TW\\ United Kingdom}


\begin{thebibliography}{99}
\bibitem{ErdosTuran} P. Erd\H{o}s and P. Tur\'an, \emph{On some problems of a statistical
group theory IV}, Acta Mathematica Academiae Scientiarum Hungaricae 19 3--4 (1968), 413--435.
\bibitem{JamesKerber} Gordon James and Adalbert Kerber, The representation theory of the symmetric group, Encyclopedia of Mathematics and its Applications 16, Addison-Wesley, 1981.
\bibitem{OEIS} The Online Encyclopedia of Integer Sequences, {\tt http://oeis.org/}.
\end{thebibliography}
\end{document}